\numberwithin{equation}{section}
\newtheorem{thm}{Theorem}[section]
\newtheorem{cor}[thm]{Corollary}
\newtheorem{lem}[thm]{Lemma}
\newtheorem{prop}[thm]{Proposition}
\theoremstyle{definition}
\newtheorem{defn}[thm]{Definition}
\newtheorem{exmp}{Example}
\newcommand{\abs}[1]{\left\vert#1\right\vert}
\newcommand{\set}[1]{\left\{#1\right\}}
\newcommand{\E}[1]{\mathbb{E}\left[#1\right]}
\newcommand{\R}[1]{\mathbb{R}#1}
\newcommand{\U}[1]{\mathbb{U}#1}
\newcommand{\var}[1]{\mathrm{Var}[#1]}
\newcommand{\I}[1]{\mathbb{I}\{#1\}}
\newcommand{\mrd}{\,\mathrm{d}}
\newcommand{\trans}{\,\mathrm{T}}
\begin{document}

\title[QMC for  discontinuous integrands with singularities]{Quasi-Monte Carlo for discontinuous integrands with singularities along the boundary of the unit cube}


\author{Zhijian He}
\address{Lingnan (University) College, Sun Yat-sen University, Guangzhou 510275, China}
\curraddr{}
\email{hezhijian87@gmail.com}
\thanks{This work was supported by the National Science Foundation of China under grant $71601189$.}

\subjclass[2010]{Primary 65D30, 65C05}

\date{}

\keywords{Quasi-Monte Carlo methods, singularities,	discontinuities}

\begin{abstract}
	This paper studies randomized quasi-Monte Carlo (QMC) sampling for discontinuous integrands having singularities along the boundary of the unit cube $[0,1]^d$. 
	Both discontinuities and singularities are extremely common in the pricing and hedging of financial derivatives and have a tremendous impact on the accuracy of QMC. It was previously known that the root mean square error of  randomized QMC is only $o(n^{−1/2})$ for
	discontinuous functions with singularities. We find that under some mild conditions, randomized QMC yields an expected error of $O(n^{-1/2-1/(4d-2)+\epsilon})$ for arbitrarily small $\epsilon>0$. Moreover, one can get a better rate if the boundary of discontinuities is parallel to some coordinate axes. As a by-product, we find that the expected error rate attains $O(n^{-1+\epsilon})$ if the discontinuities are QMC-friendly, in the sense that all the discontinuity boundaries are parallel to coordinate
	axes. The results can be used to assess the QMC accuracy for some typical problems from financial engineering.
\end{abstract}

\maketitle

\section{Introduction}
It is known that quasi-Monte Carlo (QMC) integration over the  unit cube $[0,1]^d$ yields an asymptotic error rate of $O(n^{-1}(\log n)^d)$ when the integrand has bounded variation in the sense of Hardy and Krause (BVHK); see \cite{nied:1992} for details. In this paper we consider integrands that are discontinuous and have singularities along the boundary of  the unit cube $[0,1]^d$. Such integrands cannot be BVHK because they are unbounded. Both discontinuities and singularities are extremely common in computational finance. Specifically,
many problems arising from option pricing can be formulated as an integral over an unbounded domain $\mathbb{R}^d$ (see Glasserman \cite{glas:2004} and references therein). A necessary first step in applying QMC methods to a practical integral formulated over $\mathbb{R}^d$ is
to transform the integral into the unit cube $[0, 1]^d$. The transformation may introduce singularities at the boundary. In addition, discontinuities appear in the pricing and hedging of financial derivatives (e.g., barrier options) and have a tremendous impact on the accuracy of QMC method \cite{he:wang:2014,wang:2013}.

Formally, we are interested in integrands of the form
\begin{equation}\label{eq:target}
f(\bm u)=g(\bm u)\I{\bm{u}\in \Omega},
\end{equation}
where $\Omega\subset [0,1]^d$ and $g$ has singularities along the unit cube $[0,1]^d$. The integrand $f$ has a singularity at the boundary if $\Omega\cap [0,1]^d\neq \emptyset$. The QMC estimate of the integral $$I(f)=\int_{[0,1]^d}f(\bm u)\mrd \bm u$$ is given by the average of $n$ samples
\begin{equation}\label{eq:estimate}
\hat{I}(f) = \frac 1 n \sum_{i=1}^n f(\bm u_i),
\end{equation}
with carefully chosen $\bm u_i\in [0,1]^d$. He and Wang \cite{he:wang:2015} studied the convergence rate of RQMC for discontinuous functions of the form \eqref{eq:target}, but $g$ is assumed to be BVHK that excludes singularities. Under some mild assumptions on $\Omega$,  they proved that the root mean square error  of RQMC is $O(n^{-1/2-1/(4d-2)+\epsilon})$ for any $\epsilon>0$. If some discontinuity boundaries are parallel to some coordinate axes, the rate can be further improved to $O(n^{-1/2-1/(4d_u-2)+\epsilon})$, where $d_u$ denotes the so-called irregular dimension, that is the number of axes which are not parallel to the discontinuity boundaries. The results in He and Wang \cite{he:wang:2015} cannot be applied to our setting because $g$ is not BVHK. 

Owen \cite{owen:2006} considered functions singular around any or all of the $2^d$ corners of $[0,1]^d$ and got some error rates that can be as good as $O(n^{-1+\epsilon})$ if the singular function obeys a strict enough growth rate. Owen \cite{owen:2006b} found the convergence rate of RQMC for integrands with point singularities with unknown locations. More recently, Basu and Owen \cite{basu:2016b} considered functions on the square $[0,1]^2$ that may are singular along a diagonal in the square. A key strategy in \cite{owen:2006}, \cite{owen:2006b} and \cite{basu:2016b} is to employ another function that has finite variation to approximate the singular function. The approximation has low variation. Motivated by these works, we use a low variation approximation $\tilde{g}$ to $g$, resulting in an approximation of $f$, given by 
\begin{equation}\label{eq:target2}
\tilde f(\bm u)=\tilde g(\bm u)\I{\bm{u}\in \Omega}.
\end{equation}
Then using triangle inequality gives
\begin{equation*}
\abs{I(f)-\hat I(f)}\leq \abs{I(f)-I(\tilde f)}+\abs{I(\tilde f)-\hat I(\tilde f)}+\abs{\hat I(\tilde f)-\hat I(f)}.
\end{equation*}
Suppose that $\bm u_1,\dots,\bm u_n$ in \eqref{eq:estimate} are RQMC points where each $\bm u_i\sim \U{([0,1]^d)}$ individually; then 
\begin{equation*}
\E{\abs{\hat I(\tilde f)-\hat I(f)}}\leq \frac 1 n \sum_{i=1}^n \E{\abs{f(\bm u_i)-\tilde f(\bm u_i)}}=I{(|f-\tilde f|)}.
\end{equation*}
As a result,
\begin{equation*}
\E{|I(f)-\hat I(f)|}\leq 2I{(|f-\tilde f|)}+\E{|I(\tilde f)-\hat I(\tilde f)|}.
\end{equation*}
To get an expected error bound, it suffices to bound the approximation error $I{(|f-\tilde f|)}$ and the RQMC integration error for the function \eqref{eq:target2}. An upper bound of the approximation error can be obtained similarly as in Owen \cite{owen:2006}, which requires a growth condition on $g$. For the later, we will follow the analysis in \cite{he:wang:2015} since $\tilde{g}$ is BVHK.

This paper finds some rates of convergence for RQMC integration of the function \eqref{eq:target}. Suppose that $g$ obeys a strict enough growth rate. We find that the expected error in RQMC is $O(n^{-1/2-1/(4d-2)+\epsilon})$.  Moreover, one can get a better rate $O(n^{-1/2-1/(4d_u-2)+\epsilon})$ if the boundary of $\Omega$ is parallel to some coordinate axes. These results are similar to those  found in  He and Wang \cite{he:wang:2015}. As a by-product, the expected error rate attains $O(n^{-1+\epsilon})$ if the discontinuities involved in \eqref{eq:target} are QMC-friendly (they are parallel to coordinate axes as discussed in \cite{Wang2011}). Our theoretical results can explain why QMC integration can be effective for some problems with both discontinuities and singularities in financial engineering. They also reveal the effects of discontinuities and singularities on  QMC accuracy.

This paper is organized as follows. Section \ref{sec:preliminary} gives the background on $(t,m,d)$-nets, $(t,d)$-sequences and the randomization technique proposed by \cite{Owen1995}. The singular function $g$ is assumed to satisfy the growth condition. Some results from \cite{he:wang:2015} are also reviewed.
Convergence results of the expected error in RQMC for the function \eqref{eq:target} are formally stated and proved in Section \ref{sec:rates}. Section~\ref{sec:examples} presents some examples arising from computational finance in which the growth condition is satisfied with arbitrarily small positive rates. Section \ref{sec:concl} concludes this paper.

\section{Background}\label{sec:preliminary}

\subsection{Digital nets and sequences}  	
Throughout this paper, we work with scrambled nets and sequences following the framework of He and Wang \cite{he:wang:2015}. The integer $b\geq 2$  serves as a base. To begin with, we define an  elementary interval in base $b$.

\begin{defn}
	An elementary interval in base $b$ is a subset of $[0,1)^d$ of the form
	\begin{equation}\label{eq:EI}
	E = \prod_{i=1}^d\bigg[\frac{t_i}{b^{k_i}},\frac{t_i+1}{b^{k_i}}\bigg),
	\end{equation}
	where $k_i,t_i\in \mathbb{N}$ with $t_i<b^{k_i}$ for $i=1,\dots,d$.
\end{defn}

\begin{defn}\label{defnnets}
	Let $t$ and $m$ be nonnegative integers with $t\leq m$. A finite sequence $\bm{u}_1,\dots,\bm{u}_{b^m} \in [0,1)^d$ is a $(t,m,d)$-net in base $b$ if every elementary interval in base $b$ of volume $b^{t-m}$ contains exactly $b^t$ points of the sequence.
\end{defn}
\begin{defn}
	Let $t$ be a nonnegative integer. An infinite sequence $(\bm u_i)_{i\ge 1}$ with $\bm{u}_i\in [0,1)^d$ is a $(t,d)$-sequence in base $b$ if for all $k\geq 0$ and $m\geq t$ the finite sequence $\bm{u}_{kb^m+1},\dots,\bm{u}_{(k+1)b^m}$ is a $(t,m,d)$-net in base $b$.
\end{defn}
\subsection{Scrambling}
Owen \cite{Owen1995} applied a scrambling scheme on the nets that retains the net property. Let $\bm{u}_1,\dots,\bm{u}_n$ be a $(t,m,d)$-net or the first $n$ elements of a $(t,d)$-sequence in base $b$ where $\bm{u}_i=(u_i^1,\dots,u_i^d)$. We may write the components of $\bm{u}_i$ in their base $b$ expansion $u_i^j = \sum_{k=1}^{\infty} a_{ijk}b^{-k},$
where $a_{ijk}\in\{0,\dots,b-1\}$ for all $i,j,k$. The scrambled version of $\bm{u}_1,\dots,\bm{u}_n$ is a sequence $\tilde{\bm{u}}_1,\dots,\tilde{\bm{u}}_n$ with  $\tilde{\bm{u}}_i=(\tilde{u}_i^1,\dots,\tilde{u}_i^d)$ written as $\tilde{u}_i^j = \sum_{k=1}^{\infty}\tilde{a}_{ijk}b^{-k},$ where $\tilde{a}_{ijk}$ are defined in terms of random permutations of the $a_{ijk}$. The permutation applied to $a_{ijk}$ depends on the values of $a_{ijh}$ for $h=1,\dots,k-1$. Specifically, $\tilde{a}_{ij1} = \pi_j(a_{ij1}),\ \tilde{a}_{ij2} = \pi_{ja_{ij1}}(a_{ij2}),\ \tilde{a}_{ij3} = \pi_{ja_{ij1}a_{ij2}}(a_{ij3})$, and in general
$$
\tilde{a}_{ijk} = \pi_{ja_{ij1}a_{ij2}\dots  a_{ijk-1} }(a_{ijk}).
$$
Each permutation $\pi_\bullet$ is uniformly distributed over the $b!$ permutations of $\{0,\dots,b-1\}$, and the permutations are mutually independent.

\subsection{Convergence results from He and Wang \cite{he:wang:2015}}
He and Wang \cite{he:wang:2015} considered integrands of the form $f(\bm u)=g(\bm u)\I{\bm{u}\in \Omega}$, where $g$ is BVHK and the boundary of $\Omega$ admits a $(d-1)$-dimensional Minkowski content defined below. 
\begin{defn}
	For a set $\Omega\subset[0,1]^d$, define
	\begin{equation}
	\mathcal{M}(\partial \Omega)=\lim_{\epsilon\downarrow 0}\frac{\lambda_d((\partial \Omega)_\epsilon)}{2\epsilon},
	\end{equation}
	where $(A)_\epsilon:=\{x+y|x\in A,\Vert y\Vert\leq \epsilon\}$, and $\Vert\cdot\Vert$ denotes the usual Euclidean norm.
	If $\mathcal{M}(\partial \Omega)$ exists and finite, then $\partial \Omega$ is said to admit a $(d-1)$-dimensional Minkowski content.
\end{defn}

In the terminology of geometry, $\mathcal{M}(\partial \Omega)$  is known as the surface area of the set $\Omega$. The Minkowski content has a clear intuitive basis, compared to the Hausdorff measure \cite{matt:1995} that provides an alternative to quantify the surface area. We should note that the Minkowski content coincides with the Hausdorff measure, up to a constant factor, in regular cases.  It is known that the boundary of any convex set in $[0,1]^d$ has a $(d-1)$-dimensional Minkowski content. In this case, $\mathcal{M}(\partial \Omega)\leq 2d$ since the surface area of a convex set  in $[0,1]^d$ is bounded by the surface area of  the unit cube $[0,1]^d$, which is $2d$. More generally, Ambrosio et al. \cite{Ambr:2008} found that if $\Omega$ has a Lipschitz boundary, then $\partial \Omega$ admits a ($d-1$)-dimensional Minkowski content. In their terminology, a set $\Omega$ is said to have a Lipschitz boundary if for every boundary point $a$ there exists a neighborhood $A$ of $a$, a rotation $R$ in $\R^d$ and a Lipschitz function $f: \R^{d-1}\to \R$ such that $R(\Omega\cap A)=\set{(x,y)\in (\R^{d-1}\times \R)\cap R(A)|y\geq f(x)}$. In other words, $\Omega\cap A$ is the epigraph of a Lipschitz function.

He and Wang \cite{he:wang:2015} showed that a faster convergence rate of RQMC can be achieved if the set $\Omega$ has some regularity. They studied  partially axis-parallel sets as defined below. For a positive integer $d$, denote $1{:}d=\{1,\dots,d\}$. For a set $u\subset{1{:}d}$, denote the cardinality of
$u$ as $\abs{u}$ and $-u=1{:}d\backslash u$.

\begin{defn}
	A set $\Omega$ is said to be a partially axis-parallel set with  irregular dimension $d_u=\abs{u}$ if
	\begin{equation}\label{eq:clinder}
	\Omega=\Omega_u\times\prod_{i\notin u}[a_i,b_i),
	\end{equation}
	where $u\subset 1{:}d$, $d_u<d$, $0\leq a_i<b_i\leq 1$ for $i\notin u$, and $\Omega_u$ is a Lebesgue measurable subset of $\prod_{i\in u}[0,1)$. The quantity $d_u$ counts the number of axes which are not parallel to the boundaries of $\Omega$. 
\end{defn}

Denote $V_{\mathrm{HK}}(g)$ as the variation of the function $g$ in the sense of Hardy and  Krause. See \cite{owen:2005} for an outline of the variation.
The following proposition summarizes the convergence results found in He and Wang \cite{he:wang:2015}. 

\begin{prop}\label{lem:rates}
	Suppose that $f(\bm u)=g(\bm u)\I{\bm{u}\in \Omega}$, where $g\in L^2[0,1]^d$ satisfies $V_{\mathrm{HK}}(g) < \infty$. Assume that the sequence $\bm u_1,\dots, \bm u_n$ in \eqref{eq:estimate} is a
	scrambled $(t,d)$-sequence in base $b\geq 2$. If $\partial \Omega$ admits a $(d-1)$-dimensional Minkowski content, then for all sufficiently large $n$,
	\begin{equation}\label{eq:rate1}
	\var{\hat I(f)} \le c_{d,\Omega}M_g^2 n^{-1-1/(2d-1)}(\log n)^{2d/(2d-1)},
	\end{equation}
	where $c_{d,\Omega}$ depends only on $\Omega$ and $d$, and
	\begin{equation}\label{eq:cg}
	M_g= \max\left(V_{\mathrm{HK}}(g),\sup_{\bm u\in [0,1]^d}\abs{g(\bm u)}\right).
	\end{equation}
	If $\Omega$ is a partially axis-parallel set with irregular dimension $d_u$ defined by \eqref{eq:clinder}, where $\partial \Omega_u$ admits a $(d_u-1)$-dimensional Minkowski content, then for all sufficiently large $n$,
	\begin{equation}\label{eq:rate2}
	\var{\hat I(f)} \le c_{d,\Omega}M_g^2 n^{-1-1/(2d_u-1)}(\log n)^{2d/(2d_u-1)}.
	\end{equation}
\end{prop}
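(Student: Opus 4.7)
Proof plan. Because the only obstruction to $f$ having finite Hardy--Krause variation is the indicator $\I{\bm u\in\Omega}$, my strategy is to replace $f$ by zero on the cells that straddle $\partial\Omega$, obtaining a BVHK proxy whose variation is controlled by the number of boundary cells, and then to balance the scrambled-net variance of this proxy against the residual $L^2$ mass of $f$ on the discarded cells. Concretely, I would fix a resolution $\epsilon=b^{-k}$ and partition $[0,1)^d$ into the $b^{kd}$ elementary cubes of side $\epsilon$. A cube is \emph{good} if it lies entirely inside or entirely outside $\Omega$, and \emph{bad} otherwise; since each bad cube lies inside the $(\sqrt{d}\,\epsilon)$-enlargement of $\partial\Omega$, the Minkowski-content hypothesis yields a bad-cube count $N_b\le C_\Omega\,\epsilon^{-(d-1)}$ for all small $\epsilon$.

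Letting $G$ denote the union of good cubes, I would set $\tilde f(\bm u):=f(\bm u)\I{\bm u\in G}=g(\bm u)\I{\bm u\in G\cap\Omega}$. Because $G\cap\Omega$ is a union of elementary cubes, a face-counting argument charging each axis-aligned $(d-1)$-face of $\partial(G\cap\Omega)$ to a neighbouring bad cube yields $V_{\mathrm{HK}}(\mathbb{I}_{G\cap\Omega})\le C_d\,N_b$. The product rule for Hardy--Krause variation, together with $\|g\|_\infty,\,V_{\mathrm{HK}}(g)\le M_g$, then produces $V_{\mathrm{HK}}(\tilde f)\le C_d\,M_g(1+N_b)$. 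Decomposing
\[\hat I(f)-I(f)=\bigl[\hat I(\tilde f)-I(\tilde f)\bigr]+\bigl[\hat I(f-\tilde f)-I(f-\tilde f)\bigr],\]
squaring, and taking expectations, I would bound the first piece by the squared Koksma--Hlawka inequality for the first $n$ points of a scrambled $(t,d)$-sequence, giving $\var{\hat I(\tilde f)}\le C_{d,t,b}\,V_{\mathrm{HK}}(\tilde f)^2\,n^{-2}(\log n)^{2d}$, and the second piece, whose integrand is supported on a set of measure at most $N_b\epsilon^d\le C_\Omega\,\epsilon$ and is uniformly bounded by $M_g$, by Owen's general $L^2$ upper bound $\var{\hat I(f-\tilde f)}\le C_{d,t,b}\,M_g^2\,\epsilon/n$. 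Substituting $N_b\le C\,\epsilon^{-(d-1)}$ and choosing $\epsilon$ (rounded to a power of $b$) of order $n^{-1/(2d-1)}$ balances the two contributions and delivers \eqref{eq:rate1}.

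For the partially axis-parallel case \eqref{eq:clinder}, I would refine the grid so that every endpoint $a_i,b_i$ for $i\notin u$ aligns with it; the bad cubes are then exactly those whose projection onto $\prod_{i\in u}[0,1)$ straddles $\partial\Omega_u$, and the axis-parallel factor contributes nothing to $V_{\mathrm{HK}}(\mathbb{I}_{G\cap\Omega})$. The product rule consequently gives the sharper bound $V_{\mathrm{HK}}(\tilde f)\le C_d\,M_g(1+N_b^{(u)})$ with $N_b^{(u)}\le C\,\epsilon^{-(d_u-1)}$ from the $(d_u-1)$-dimensional Minkowski content of $\partial\Omega_u$, while the residual measure of the bad region remains $O(\epsilon)$. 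Rebalancing with $\epsilon\sim n^{-1/(2d_u-1)}$ then produces \eqref{eq:rate2}; the $(\log n)^{2d/(2d_u-1)}$ factor reflects that the underlying scrambled net still lives in $d$ dimensions.

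The main technical obstacle is the variation estimate: bounding $V_{\mathrm{HK}}(\mathbb{I}_{G\cap\Omega})$ by a dimensional constant times $N_b$ (and, in the axis-parallel case, by $N_b^{(u)}$) demands a careful accounting of the $(d-1)$-dimensional faces of $\partial(G\cap\Omega)$ so that each face is assigned to only a bounded number of bad cubes, avoiding double counting. A related subtlety in the axis-parallel variant is verifying that the HK product rule genuinely decouples the $u$ and $-u$ factors once the axis-parallel intervals align with the grid, so that the variation truly collapses to its $d_u$-dimensional value as required.
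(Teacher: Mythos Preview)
The paper does not actually prove this proposition: its proof consists of citing Theorems~3.5 and~3.6 of He and Wang (2015), the only added content being to make the implied constant visible as $c_{d,\Omega}M_g^2$ so that it can be tracked through Theorem~\ref{thm:rates}. Your proposal is therefore a genuine attempt to reprove the cited result, and it does not follow the route taken there. He and Wang work directly with Owen's scrambled-net variance decomposition $\var{\hat I(f)}=\sum_{\bm k}\Gamma_{\bm k}\sigma_{\bm k}^2$ in the Haar/Walsh basis, bounding the level-$\bm k$ gain $\sigma_{\bm k}^2$ by splitting the $b$-adic boxes at that level into those entirely inside or outside $\Omega$ (controlled by $V_{\mathrm{HK}}(g)$ and $\|g\|_\infty$) and those meeting $\partial\Omega$ (counted via the Minkowski content), and then summing over $\bm k$. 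Your approach instead packages the geometry into a single BVHK proxy $\tilde f$, applies Koksma--Hlawka once, and handles the residual by Owen's crude $O(\sigma^2/n)$ bound; the balancing arithmetic you give is correct and reproduces both \eqref{eq:rate1} and \eqref{eq:rate2}. Your route is more elementary in that it avoids the per-level wavelet analysis, at the cost of invoking the worst-case Koksma--Hlawka inequality rather than the sharper scrambled-net gains; since both arguments ultimately bottleneck on the same Minkowski-content box count, they land on the same rate.

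Two cautions on your outlined argument. First, the estimate $V_{\mathrm{HK}}(\mathbb{I}_{G\cap\Omega})\le C_dN_b$ is a \emph{vertex} (mixed-difference) count rather than a face count: you need every grid vertex at which the alternating sum of $\mathbb{I}_{G\cap\Omega}$ over the $2^d$ incident cubes is nonzero to neighbour a bad cube, and a good-inside cube sharing a face with a good-outside cube (possible when $\partial\Omega$ coincides with that face) would violate this unless you declare cubes whose \emph{closure} meets $\partial\Omega$ to be bad. Second, in the partially axis-parallel case you cannot in general align the grid with arbitrary endpoints $a_i,b_i$; one instead allows $O(1)$ additional bad slabs in the $-u$ directions, which does not affect the rate but should be stated.
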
 
\begin{proof}
	The first part \eqref{eq:rate1} has been proven in Theorem 3.5 of \cite{he:wang:2015}, and the second part \eqref{eq:rate2} has been proven in Theorem 3.6 of \cite{he:wang:2015}. Here we show the implicit constants  in the upper bounds of the scrambled net variances because they are useful in the following proofs.
\end{proof}

Functions of BVHK must necessarily be bounded. So $M_g$ given by \eqref{eq:cg} is finite. However, for many practical problems arising from computational finance, $g$ has singularities on the surface of the unit cube $[0,1]^d$. In this cases, $g$ is unbounded, and hence $g$ has infinite variation. The conditions in Proposition~\ref{lem:rates} are thus not satisfied. Before establishing the convergence rate of scrambled net errors for singular integrands, we suppose that $g$ satisfies the growth condition as studied in Owen \cite{owen:2006}.

\subsection{Growth Condition}
For a set $v\subseteq 1{:}d$, $\partial^{v}g$ denotes the mixed partial derivative
of $g$ taken once with respect to components with indices in $v$. Following Owen \cite{owen:2006}, we first introduce a growth condition for $g$ on $(0,1)^d$ that may become singular at the boundary of $[0,1]^d$ as shown in some integrands in the valuation of options with unbounded payoffs (see Section~\ref{sec:examples} for some examples). 
\begin{defn}
	A function $g$ defined on $(0,1)^d$ is said to satisfy the boundary growth condition if
	\begin{equation}\label{eq:grow}
	\abs{\partial^{v}g(\bm u)}\leq B\prod_{i\in v}\min(u_i,1-u_i)^{-A_i-1}\prod_{i\notin v}\min(u_i,1-u_i)^{-A_i}
	\end{equation}
	holds for some $A_i>0$, some $B<\infty$ and all $v\subseteq 1{:}d$.	
\end{defn}
The boundary growth condition is the second growth condition described in Owen \cite{owen:2006}. Owen \cite{owen:2006b} and Basu and Owen \cite{basu:2016b} studied other types of growth conditions for point singularities and singularities along a diagonal in the square, respectively. 
Large values of $A_i$ correspond to more severe singularities. When $\max_i A_i\geq 1$ the upper bound for $\abs{g}$ is not even integrable.  When $\max_i A_i< 1/2$, then $f^2$ is integrable and Monte Carlo sampling has a root mean square error of $O(n^{-1/2})$. We use a region to avoid the singularities as
\begin{equation}
K(\epsilon) = \{\bm u\in[0,1]^d|\prod_{1\le i\le d}\min(u_i,1-u_i)\ge \epsilon\},
\end{equation}
for small $\epsilon>0$. We now define an extension $g_\epsilon$ of $g$ from $K(\epsilon)$ to $[0,1]^d$ such that $g_\epsilon(\bm u)=g(\bm u)$ for $\bm u\in K(\epsilon)$.
\begin{defn}
A set $K\subset [0,1]^d$ is said to be Sobol' extensible with anchor $\bm c$ if for every $\bm u\in K$ the rectangle $\prod_{i=1}^d[\min(u_i,c_i),\max(u_i,c_i)]\subset K$. 	
\end{defn}
It is easy to see that $K(\epsilon)$ is Sobol' extensible with anchor $\bm c = (1/2,\dots,1/2)$. So one may write 
\begin{equation}
g(\bm u) = g(\bm{c}) +\sum_{v\neq \emptyset}\int_{[\bm c^v,\bm{u}^v]}\partial^{v}g(\bm z^v{:}\bm c^{-v})\mrd \bm z^v,
\end{equation}
and then the desired low variation approximation of $g$ is given by
\begin{equation}\label{eq:extg}
g_\epsilon(\bm u) = g(\bm{c})+ \sum_{v\neq \emptyset}\int_{[\bm c^v,\bm{u}^v]}\partial^{v}g(\bm z^v{:}\bm c^{-v})\I{\bm z^v{:}\bm c^{-v}\in K(\epsilon)}\mrd \bm z^v,
\end{equation}
where $\bm z^v{:}\bm c^{-v}$ denotes the point $\bm y\in[0,1]^d$ with $y_j=z_j$ for $j\in v$ and $y_j=c_j$ for  $j\notin v$.
\section{Expected errors in RQMC}\label{sec:rates}

\begin{prop}\label{prop:hk}
	If $g$ satisfies the boundary growth condition \eqref{eq:grow},  then for any $\eta>0$ there exists  $C_1<\infty$ such that
	\begin{equation}\label{eq:upHK}
	V_{\mathrm{HK}}(g_\epsilon)\leq C_1 \epsilon^{-\max_i A_i-\eta}.
	\end{equation}
	If there is a unique maximum among $A_1,\dots,A_d$, then \eqref{eq:upHK} holds with $\eta=0$.
\end{prop}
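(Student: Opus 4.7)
The plan is to use the Hardy--Krause decomposition anchored at $\bm c=(1/2,\dots,1/2)$,
\begin{equation*}
V_{\mathrm{HK}}(g_\epsilon) \;=\; \sum_{\emptyset\neq v\subseteq 1{:}d} \int_{[0,1]^{|v|}} \bigl|\partial^v g_\epsilon(\bm z^v{:}\bm c^{-v})\bigr|\,\mrd \bm z^v,
\end{equation*}
and to bound each Vitali piece. From \eqref{eq:extg}, after restricting to $\bm u^{-v}=\bm c^{-v}$ every term indexed by $w\not\subseteq v$ vanishes because the rectangle $[\bm c^w,\bm u^w]$ collapses in the directions of $w\setminus v$, and $\partial^v$ then annihilates each remaining term with $w\subsetneq v$ (those lack the coordinates in $v\setminus w$). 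Only the $w=v$ term survives, and stripping its integral by $\partial^v$ gives the face identity
\begin{equation*}
\partial^v g_\epsilon(\bm z^v{:}\bm c^{-v}) \;=\; \partial^v g(\bm z^v{:}\bm c^{-v})\,\I{\bm z^v{:}\bm c^{-v}\in K(\epsilon)}.
\end{equation*}

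Plugging in the growth condition \eqref{eq:grow} and using $\min(c_i,1-c_i)=1/2$ for $i\notin v$, the factors on the fixed coordinates become a $v$-dependent constant and the indicator reduces to $\I{\prod_{i\in v}\min(z_i,1-z_i)\ge 2^{d-|v|}\epsilon}$. Folding $z_i\mapsto 1-z_i$ and writing $s_i=\min(z_i,1-z_i)\in[0,1/2]$, each Vitali piece is controlled up to constants by
\begin{equation*}
J_v(\epsilon') \;:=\; \int_{[0,1/2]^{|v|}} \prod_{i\in v} s_i^{-A_i-1}\,\I{\prod_{i\in v}s_i\ge \epsilon'}\,\mrd\bm s,\qquad \epsilon'=2^{d-|v|}\epsilon.
\end{equation*}
The substitution $t_i=-\log s_i$ turns $J_v$ into $\int \prod_{i\in v} e^{A_i t_i}\,\mrd\bm t$ over the simplex $\{t_i\ge \log 2,\ \sum_{i\in v}t_i\le T\}$ with $T=\log(1/\epsilon')$. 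The crude bound $A_i\le A^*(v):=\max_{i\in v}A_i$ gives $J_v=O\bigl(e^{A^*(v)T}T^{|v|-1}\bigr)=O\bigl(\epsilon^{-A^*(v)}|\log\epsilon|^{|v|-1}\bigr)$, and $T^{|v|-1}\le C_\eta e^{\eta T}$ absorbs the polylog to yield $J_v=O(\epsilon^{-\max_i A_i-\eta})$ for any $\eta>0$. Summing over nonempty $v$ gives \eqref{eq:upHK}.

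For the $\eta=0$ sharpening when $A_{i^*}=\max_i A_i$ is uniquely attained, I would integrate in the variables $t_j$ with $j\in v\setminus\{i^*\}$ exactly rather than invoke the crude exponential bound. Each such integration contributes a factor $(e^{A_j(T-\sigma)}-1)/A_j$, and a short induction on $|v|$ (using that $A_j<A_{i^*}$ strictly for every $j\ne i^*$) shows that when $v\ni i^*$ the leading contribution is $e^{A_{i^*}T}$ times the finite constant $\prod_{j\in v\setminus\{i^*\}}[A_j(A_{i^*}-A_j)]^{-1}$, which is $O(\epsilon^{-A_{i^*}})$ with no logarithmic factor. When $v\not\ni i^*$, $A^*(v)<A_{i^*}$ strictly and the polylog is absorbed by the gap in exponents. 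The main obstacle is the face-derivative step: it silently relies on $g_\epsilon$ being mixed-absolutely continuous in the Hardy--Krause sense so that the variation admits the $L^1$ representation in terms of mixed partials on the faces through $\bm c$. Verifying this from \eqref{eq:extg} requires a careful interchange of differentiation, Lebesgue integration, and the discontinuous indicator $\I{K(\epsilon)}$; once this is in place the rest is a calculus exercise.
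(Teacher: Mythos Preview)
The paper does not prove this proposition at all; it simply cites Theorem~5.5 of Owen~\cite{owen:2006}. Your proposal is a correct reconstruction of that argument: express $V_{\mathrm{HK}}(g_\epsilon)$ as the sum of face integrals of $|\partial^v g_\epsilon(\bm z^v{:}\bm c^{-v})|$, use the Sobol' extension \eqref{eq:extg} to obtain the face identity $\partial^v g_\epsilon(\bm z^v{:}\bm c^{-v})=\partial^v g(\bm z^v{:}\bm c^{-v})\,\I{\bm z^v{:}\bm c^{-v}\in K(\epsilon)}$, and then bound the resulting integrals under the growth condition.

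The only stylistic difference is in how you estimate $J_v(\epsilon')$. You pass to logarithmic coordinates and bound over a simplex, which yields $O(\epsilon^{-A^*(v)}|\log\epsilon|^{|v|-1})$ and then absorbs the polylog into $\epsilon^{-\eta}$. The paper's own computation of the identical integral (it appears verbatim as $I_v$ in the proof of Proposition~\ref{prop:max}) instead integrates out the dominant coordinate $m(v)=\arg\max_{i\in v}A_i$ first, producing the factor $\epsilon^{-A_{m(v)}}/A_{m(v)}$ directly, after which the remaining $|v|-1$ integrals are each finite because the exponents $A_{m(v)}-A_i-1>-1$. That route avoids the logarithmic factor from the outset when the $A_i$ are distinct, and handles ties by perturbing the non-maximal $A_i$ upward. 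Both methods are elementary and equivalent in strength; the iterated-integration version is slightly cleaner for the $\eta=0$ statement since no induction is needed.

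Your caveat about the face-derivative step is well placed but not an obstacle: the extension \eqref{eq:extg} is by construction an integral of bounded integrands over coordinate rectangles, so $g_\epsilon$ is absolutely continuous in each variable with mixed partials in $L^1$ on every face through $\bm c$, and the Hardy--Krause variation coincides with the sum of the $L^1$ face integrals. This is exactly the mechanism Owen uses, and nothing further is required.
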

\begin{proof}
	See the proof of Theorem 5.5 in \cite{owen:2006}.
\end{proof}

\begin{prop}\label{prop:tranError}
	Let $f_\epsilon(\bm{u}) = g_\epsilon(\bm u)\I{\bm u\in \Omega}$, where $g_\epsilon$ is given by \eqref{eq:extg}. If $g$ satisfies the boundary growth condition \eqref{eq:grow} with $\max_i A_i <1$,  then for any $\eta\in (0,1-\max_i A_i)$, there exists  $C_2<\infty$ such that 
	\begin{equation}\label{eq:error}
	I(\abs{f-f_\epsilon})\leq C_2\epsilon^{1-\max_iA_i-\eta}.
	\end{equation} 
	If there is a unique maximum among $A_1,\dots,A_d$, then \eqref{eq:error} holds with $\eta=0$.
\end{prop}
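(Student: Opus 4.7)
The plan is to exploit the fact that $g_\epsilon = g$ on $K(\epsilon)$, so that $|f - f_\epsilon| = |g - g_\epsilon|\I{\bm u \in \Omega}$ is supported on the thin shell $[0,1]^d \setminus K(\epsilon)$ and dominated pointwise by $|g| + |g_\epsilon|$. This reduces the claim to bounding each of $\int_{[0,1]^d\setminus K(\epsilon)}|g(\bm u)|\mrd\bm u$ and $\int_{[0,1]^d\setminus K(\epsilon)}|g_\epsilon(\bm u)|\mrd\bm u$ by $O(\epsilon^{1-\alpha-\eta})$, writing $\alpha := \max_i A_i$.

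For the first integral I would apply the boundary growth condition \eqref{eq:grow} with $v = \emptyset$, giving the pointwise bound $|g(\bm u)| \le B\prod_{i=1}^d \min(u_i,1-u_i)^{-A_i}$. Since $\prod_i \min(u_i,1-u_i) < \epsilon$ on the shell, the elementary estimate $\I{\bm u \notin K(\epsilon)} \le \epsilon^\delta \prod_i \min(u_i,1-u_i)^{-\delta}$ holds for any $\delta > 0$. By Fubini the resulting upper bound factorizes into $\epsilon^\delta \prod_i \int_0^1 \min(u_i,1-u_i)^{-A_i-\delta}\mrd u_i$, which is finite whenever $\delta < 1-\alpha$; setting $\delta = 1-\alpha-\eta$ yields the required $O(\epsilon^{1-\alpha-\eta})$.

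For the second integral I would establish the pointwise bound $|g_\epsilon(\bm u)| \le \sup_{\bm u'\in K(\epsilon)}|g(\bm u')|$: outside $K(\epsilon)$, the truncated integrals in \eqref{eq:extg} collapse $g_\epsilon(\bm u)$ to $g$ evaluated at the coordinate-wise projection of $\bm u$ onto $\partial K(\epsilon)$. The growth condition together with the constraint $\prod_i \min(u_i,1-u_i) \ge \epsilon$ on $K(\epsilon)$ yields $\sup_{K(\epsilon)}|g| \le C\epsilon^{-\alpha}$, and multiplying by the standard volume estimate $\lambda_d([0,1]^d \setminus K(\epsilon)) = O(\epsilon(\log(1/\epsilon))^{d-1})$ gives $O(\epsilon^{1-\alpha-\eta})$ for any $\eta > 0$. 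The hardest step will be justifying this pointwise control of $g_\epsilon$, since it requires unwinding the alternating sum in \eqref{eq:extg} and checking that each truncation by $\I{\cdot \in K(\epsilon)}$ projects onto $\partial K(\epsilon)$; a clean fallback is to combine $\|g_\epsilon\|_\infty \le V_{\mathrm{HK}}(g_\epsilon) + |g(\bm c)|$ with Proposition~\ref{prop:hk}, at the cost of an extra $\epsilon^{-\eta_0}$ factor easily absorbed into $\eta$. The unique-maximum refinement then follows from a sharper Fubini analysis of both integrals that avoids the borderline logarithmic factors.
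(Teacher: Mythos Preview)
Your argument is correct for the main $\eta>0$ claim, but it takes a genuinely different route from the paper. The paper's proof is two lines: it invokes Theorem~5.5 of Owen~\cite{owen:2006} (which already contains the bound $I(|g-g_\epsilon|)\le C_2\epsilon^{1-\max_iA_i-\eta}$ via Sobol's iterated-integral technique) and then observes $I(|f-f_\epsilon|)=I(|g-g_\epsilon|\I{\bm u\in\Omega})\le I(|g-g_\epsilon|)$. You instead split $|g-g_\epsilon|\le|g|+|g_\epsilon|$ on the shell and bound each piece separately, which is more elementary and self-contained but coarser. Your $\epsilon^\delta$ trick for the $|g|$ piece is exactly the right idea and works cleanly.

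Two caveats. First, your projection claim for $g_\epsilon$ is not correct as stated: $K(\epsilon)$ is defined by the product constraint $\prod_i\min(u_i,1-u_i)\ge\epsilon$, not a box, and for a curved Sobol'-extensible region the extension \eqref{eq:extg} does \emph{not} in general equal $g$ at a coordinate-wise boundary projection. Your fallback via $\|g_\epsilon\|_\infty\le V_{\mathrm{HK}}(g_\epsilon)+|g(\bm c)|$ together with Proposition~\ref{prop:hk} does work, and in fact the paper later proves exactly the sup bound you need as Proposition~\ref{prop:max}. Second, the unique-maximum refinement $\eta=0$ does not follow from either of your two bounds as written: the $\epsilon^\delta$ trick fails at $\delta=1-\alpha$ because $\int_0^1\min(u,1-u)^{-1}\mrd u$ diverges, and the fallback carries a $(\log(1/\epsilon))^{d-1}$ factor. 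To get $\eta=0$ you would need the sharper iterated integration that singles out the coordinate with the largest $A_i$ first, as done in the proof of Proposition~\ref{prop:max}; the paper sidesteps this by citing Owen directly.
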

\begin{proof}	
	From the proof of Theorem 5.5 in \cite{owen:2006} which is based on a result in \cite{sobol:1973}, we have $I(\abs{g-g_\epsilon})\leq C_2\epsilon^{1-\max_iA_i-\eta}$.	
	The upper bound \eqref{eq:error} then follows from $I(\abs{f-f_\epsilon})= I(\abs{g-g_\epsilon}\I{\bm{u}\in\Omega}) \le I(\abs{g-g_\epsilon})$.
\end{proof}
\begin{prop}\label{prop:max}
	If $g$ satisfies the boundary growth condition \eqref{eq:grow}, then for any $\eta>0$ there exists  $C_3<\infty$ such that
	\begin{equation}\label{eq:upg}
	\sup_{\bm u\in [0,1]^d}\abs{g_\epsilon(\bm u)}\leq C_3\epsilon^{-\max_i A_i-\eta}.
	\end{equation}
	If there is a unique maximum among $A_1,\dots,A_d$, then \eqref{eq:upg} holds with $\eta=0$.
\end{prop}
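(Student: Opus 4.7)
The plan is to bound $|g_\epsilon(\bm u)|$ uniformly in $\bm u\in[0,1]^d$ by directly estimating the anchored decomposition \eqref{eq:extg}. Applying the triangle inequality and enlarging the rectangle $[\bm c^v,\bm u^v]$ to $[0,1]^v$ gives the $\bm u$-free bound
\begin{equation*}
|g_\epsilon(\bm u)| \le |g(\bm c)| + \sum_{v\neq \emptyset}\int_{[0,1]^v}|\partial^{v}g(\bm z^v{:}\bm c^{-v})|\,\I{\bm z^v{:}\bm c^{-v}\in K(\epsilon)}\mrd \bm z^v.
\end{equation*}
The anchor value $g(\bm c)$ is a finite constant. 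For each nonempty $v$, since $\min(c_i,1-c_i)=1/2$ for $i\notin v$, the growth condition \eqref{eq:grow} gives
\begin{equation*}
|\partial^v g(\bm z^v{:}\bm c^{-v})| \le B\cdot 2^{\sum_{i\notin v}A_i}\prod_{i\in v}\min(z_i,1-z_i)^{-A_i-1}.
\end{equation*}

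Next I would reduce the remaining $|v|$-dimensional integral to a canonical form. Splitting $[0,1]^v$ into $2^{|v|}$ orthants according to whether $z_i<1/2$ or $z_i\ge 1/2$, and using the reflection $z_i\mapsto 1-z_i$ on coordinates in the second class, each orthant contributes an integral of the form
\begin{equation*}
J_v(\epsilon_v) := \int_{[0,1/2]^v}\prod_{i\in v}z_i^{-A_i-1}\I{\prod_{i\in v}z_i\ge \epsilon_v}\mrd \bm z^v,
\end{equation*}
where $\epsilon_v=2^{d-|v|}\epsilon$ (since the condition $\bm z^v{:}\bm c^{-v}\in K(\epsilon)$ collapses to $\prod_{i\in v}\min(z_i,1-z_i)\ge \epsilon_v$).

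The core step is to show $J_v(\epsilon_v)=O(\epsilon^{-\max_{i\in v}A_i-\eta})$ for any $\eta>0$, with $\eta=0$ if the maximum within $v$ is unique. This is precisely the estimate Owen \cite{owen:2006} carries out in the proof of his Theorem 5.5: the substitution $y_i=-\log z_i$ turns $J_v(\epsilon_v)$ into an integral over the simplex $\{\bm y\ge 0:\sum_i y_i\le \log(1/\epsilon_v)\}$ of $\exp(\sum_i A_i y_i)$ up to constants, which is dominated by the largest $A_i$ in $v$ (the extra $\eta$ absorbs polynomial factors in $\log(1/\epsilon)$ that arise from ties among the $A_i$). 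Since I may directly cite this lemma, I would not reprove it.

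Putting the pieces together, each term in the sum over $v$ is $O(\epsilon^{-\max_{i\in v}A_i-\eta})$, and the dominant term corresponds to any $v$ containing an index achieving $\max_i A_i$. Summing over the finite collection of subsets $v\subseteq 1{:}d$ yields the desired bound
\begin{equation*}
\sup_{\bm u\in[0,1]^d}|g_\epsilon(\bm u)|\le C_3\epsilon^{-\max_i A_i-\eta},
\end{equation*}
with $\eta=0$ when $\max_i A_i$ is uniquely attained (since then it is uniquely attained within any $v$ that contains the maximizing index). The main obstacle is the integral estimate for $J_v(\epsilon_v)$ in the presence of ties among the $A_i$'s; handling this cleanly is what forces the infinitesimal $\eta>0$ loss, and aligns this result with both Proposition \ref{prop:hk} and the $V_{\BV}$ bound in Owen \cite{owen:2006}.
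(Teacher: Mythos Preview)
Your proposal is correct and follows essentially the same approach as the paper: apply the triangle inequality to the anchored representation \eqref{eq:extg}, enlarge the integration domain to obtain a $\bm u$-independent bound, invoke the growth condition with $\min(c_i,1-c_i)=1/2$ for $i\notin v$, and then estimate the resulting constrained integrals $I_v$ (your $J_v$ after reflection) to order $\epsilon^{-\max_{i\in v}A_i}$, handling ties among the $A_i$ by a small perturbation. The only cosmetic difference is that the paper carries out the $I_v$ estimate explicitly by integrating the coordinate with the largest exponent first (Fubini), whereas you describe the equivalent computation via the substitution $y_i=-\log z_i$ and then defer to Owen's Theorem~5.5; since the paper itself notes the procedure is ``similar to the proof of Theorem~5.5 in \cite{owen:2006}'', this is the same argument.
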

\begin{proof}
	The procedure is similar to the proof of Theorem 5.5 in \cite{owen:2006}.
	Combining \eqref{eq:extg} with the boundary growth condition \eqref{eq:grow}, we have 
	\begin{align}
	\abs{g_\epsilon(\bm u)} &\leq  \abs{g(\bm{c})}+ \sum_{v\neq \emptyset}\int_{[\bm c^v,\bm{u}^v]}\abs{\partial^{v}g(\bm z^v{:}\bm c^{-v})}\I{\bm z^v{:}\bm c^{-v}\in K(\epsilon)}\mrd \bm z^v\notag\\
	&\leq  \abs{g(\bm{c})}+B \sum_{v\neq \emptyset} I_v\prod_{i\notin  v}\min(c_i,1-c_i)^{-A_i},\label{eq:boundg}
	\end{align}
	where
	\begin{align*}
	I_v:=\int_{[\bm{0}^v,\bm{1}^v]}\prod_{i\in v}\min(z_i,1-z_i)^{-A_i-1}\I{\bm z^v{:}\bm c^{-v}\in K(\epsilon)}\mrd\bm z^v.
	\end{align*}
	We first assume that $A_1,\dots,A_d$ are distinct positive numbers. Let $m(v)=\arg\max_{i\in v}A_i$ and $\tilde{v}=v-\{m(v)\}$. Let $e(\bm z_{\tilde{v}})=\prod_{i\in \tilde{v}}\min(z_i,1-z_i)\prod_{i\notin  v}\min(c_i,1-c_i)$. Then
	\begin{align*}
	I_v&=\int_{[\bm{0}^{\tilde{v}},\bm{1}^{\tilde{v}}]}\prod_{i\in \tilde{v}}\min(z_i,1-z_i)^{-A_i-1}\left(\int_{\min(y,1-y)\geq  \epsilon/e(\bm z_{\tilde{v}})}\min(y,1-y)^{-A_{m(v)}-1}\mrd y\right)\mrd \bm z_{\tilde{v}}\\
	&=2\int_{[\bm{0}^{\tilde{v}},\bm{1}^{\tilde{v}}]}\prod_{i\in \tilde{v}}\min(z_i,1-z_i)^{-A_i-1}\left(\int_{\epsilon/ e(\bm z_{\tilde{v}})}^{1/2}y^{-A_{m(v)}-1}\mrd y\right)\mrd \bm z_{\tilde{v}}\\
	&\leq 2\int_{[\bm{0}^{\tilde{v}},\bm{1}^{\tilde{v}}]}\prod_{i\in \tilde{v}}\min(z_i,1-z_i)^{-A_i-1}\frac{( \epsilon/e(\bm z_{\tilde{v}}))^{-A_{m(v)}}}{A_{m(v)}}\mrd \bm z_{\tilde{v}}\\
	&=2\frac{\epsilon^{-A_{m(v)}}}{A_{m(v)}}\prod_{i\notin  v}\min(c_i,1-c_i)^{A_{m(v)}}\int_{[\bm{0}^{\tilde{v}},\bm{1}^{\tilde{v}}]}\prod_{i\in \tilde{v}}\min(z_i,1-z_i)^{A_{m(v)}-A_i-1}\mrd \bm z_{\tilde{v}}\\
	&\leq 2\frac{\epsilon^{-A_{m(v)}}}{A_{m(v)}}\prod_{i\notin  v}\min(c_i,1-c_i)^{A_{m(v)}}\prod_{i\in\tilde{v}}\frac{2}{A_{m(v)}-A_i}\\
	&= C_v\epsilon^{-A_{m(v)}},
	\end{align*}
	where $C_v$ is a finite constant. It then follows from \eqref{eq:boundg} that 
	\begin{equation}\label{eq:maxge}
	\abs{g_\epsilon(\bm u)} \leq  \abs{g(\bm{c})}+\tilde{B}\epsilon^{-\max_i A_i}
	\end{equation}
	for some finite $\tilde{B}$. 
	
	If $A_j=A_k<\max_iA_i$ for some $j\neq k$, then we increase some of the $A_i$ so that $A_1,\dots,A_d$ are distinct, while leaving $\max_iA_i$ unchanged. Then \eqref{eq:maxge} also holds if there is a unique maximum among $A_1,\dots,A_d$. We thus have \eqref{eq:upg} with $\eta=0$ due to $\abs{g(\bm{c})}<\infty$. If there are two or more maximums among $A_1,\dots,A_d$, then these maximums can be increased to distinct values, while raising $\max_iA_i$ by no more than $\eta$.
\end{proof}
\begin{thm}\label{thm:rates}
	Suppose that $f$ is given by \eqref{eq:target}, where $g$ satisfies the boundary growth condition \eqref{eq:grow}  with $\max_i A_i <1$. Assume that the sequence $\bm u_1,\dots, \bm u_n$ in \eqref{eq:estimate} is a
	scrambled $(t,d)$-sequence in base $b\geq 2$. If $\partial \Omega$ admits a $(d-1)$-dimensional Minkowski content, then for any $\eta\in (0,1-\max_i A_i)$,
	\begin{equation}\label{eq:errorbound1}
	\E{\abs{I(f)-\hat{I}(f)}} = O(n^{-\gamma(1/2+1/(4d-2))}(\log n)^{\gamma d/(2d-1)}),
	\end{equation}
	where $\gamma=1-\max_iA_i-\eta$.
	If $\Omega$ is a partially axis-parallel set with  irregular dimension $d_u$ defined by \eqref{eq:clinder}, where $\partial \Omega_u$ admits a $(d_u-1)$-dimensional Minkowski content, then
	\begin{equation}\label{eq:errorbound2}
	\E{\abs{I(f)-\hat{I}(f)}} = O(n^{-\gamma(1/2+1/(4d_u-2))}(\log n)^{\gamma d/(2d_u-1)}).
	\end{equation}
	If there is a unique maximum among $A_1,\dots,A_d$, then \eqref{eq:errorbound1} and \eqref{eq:errorbound2} hold with $\gamma=1-\max_iA_i$.
\end{thm}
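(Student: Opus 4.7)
The plan is to combine the triangle-inequality decomposition already introduced in the introduction with the three propositions just proved (Propositions~\ref{prop:hk}, \ref{prop:tranError}, \ref{prop:max}) and with the scrambled-net variance bound of Proposition~\ref{lem:rates}, and then to optimize the cutoff parameter $\epsilon$. Set $\alpha=\max_i A_i$ and fix $\eta\in(0,1-\alpha)$. Let $f_\epsilon(\bm u)=g_\epsilon(\bm u)\I{\bm u\in\Omega}$ with $g_\epsilon$ defined by \eqref{eq:extg}. Exactly as in the introduction,
\begin{equation*}
\E{|I(f)-\hat I(f)|}\le 2 I(|f-f_\epsilon|)+\E{|I(f_\epsilon)-\hat I(f_\epsilon)|}.
\end{equation*}
Proposition~\ref{prop:tranError} bounds the first summand by $2C_2\epsilon^{1-\alpha-\eta}$, so the task reduces to controlling the RQMC error of the low-variation surrogate $f_\epsilon$.

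To handle the second summand, I would verify that $g_\epsilon$ meets the hypotheses of Proposition~\ref{lem:rates}: boundedness (hence $L^2$) comes from Proposition~\ref{prop:max}, and $V_{\mathrm{HK}}(g_\epsilon)<\infty$ from Proposition~\ref{prop:hk}. Together these give $M_{g_\epsilon}\le C_4\epsilon^{-\alpha-\eta}$ for some finite $C_4$, using \eqref{eq:cg}. Then Jensen's inequality gives
\begin{equation*}
\E{|I(f_\epsilon)-\hat I(f_\epsilon)|}\le \sqrt{\var{\hat I(f_\epsilon)}}\le \sqrt{c_{d,\Omega}}\,M_{g_\epsilon}\,n^{-1/2-1/(4d-2)}(\log n)^{d/(2d-1)},
\end{equation*}
where the variance bound is \eqref{eq:rate1}. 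Substituting $M_{g_\epsilon}\le C_4\epsilon^{-\alpha-\eta}$, the total bound on $\E{|I(f)-\hat I(f)|}$ becomes
\begin{equation*}
A\epsilon^{1-\alpha-\eta}+B\epsilon^{-\alpha-\eta}\,n^{-1/2-1/(4d-2)}(\log n)^{d/(2d-1)},
\end{equation*}
for suitable constants $A,B$ independent of $n$ and $\epsilon$.

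The final step is to balance these two terms by choosing $\epsilon=\epsilon_n$ so that the two contributions have the same order; equating them yields $\epsilon_n\asymp n^{-1/2-1/(4d-2)}(\log n)^{d/(2d-1)}$, and inserting this into the $\epsilon^{1-\alpha-\eta}$ term gives the claimed rate $O(n^{-\gamma(1/2+1/(4d-2))}(\log n)^{\gamma d/(2d-1)})$ with $\gamma=1-\alpha-\eta$, which is \eqref{eq:errorbound1}. For \eqref{eq:errorbound2}, I would repeat the argument verbatim, but invoke the partially axis-parallel variance bound \eqref{eq:rate2} in place of \eqref{eq:rate1}, so that every occurrence of $d$ in the exponent of $n$ is replaced by $d_u$ (the logarithmic factor keeps $d$ in the numerator). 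Finally, the improved statement when $A_1,\dots,A_d$ has a unique maximum follows from the fact that each of Propositions~\ref{prop:hk}, \ref{prop:tranError}, \ref{prop:max} is then available with $\eta=0$, so the balancing argument goes through with $\gamma=1-\alpha$.

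I do not anticipate any serious obstacle: the three propositions already absorb all of the singularity-specific analysis and Proposition~\ref{lem:rates} absorbs all of the RQMC/Minkowski-content analysis, so the proof is essentially a bookkeeping exercise in which the main care is (i) choosing a single $\eta$ that makes all three preceding estimates simultaneously valid, and (ii) solving the trivial one-parameter optimization in $\epsilon$. The mildly delicate point is verifying that $f_\epsilon=g_\epsilon\I{\bm u\in\Omega}$ really does satisfy the hypotheses of Proposition~\ref{lem:rates}, which amounts to checking that $g_\epsilon\in L^2[0,1]^d$ and has bounded Hardy--Krause variation uniformly (in $\bm u$, not in $\epsilon$); both facts follow immediately from the preceding three propositions applied at a fixed $\epsilon>0$.
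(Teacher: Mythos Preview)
Your proposal is correct and follows essentially the same route as the paper: the paper also bounds $\E{|I(f)-\hat I(f)|}$ by $2I(|f-f_\epsilon|)+\var{\hat I(f_\epsilon)}^{1/2}$, invokes Propositions~\ref{prop:hk}, \ref{prop:tranError}, \ref{prop:max} together with Proposition~\ref{lem:rates} to obtain the two terms $O(\epsilon^\gamma)$ and $O(\epsilon^{\gamma-1}n^{-1/2-1/(4d-2)}(\log n)^{d/(2d-1)})$, and then sets $\epsilon\propto n^{-1/2-1/(4d-2)}(\log n)^{d/(2d-1)}$. The treatment of the partially axis-parallel case and of the unique-maximum case is likewise identical.
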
 
\begin{proof}
	Using the triangle inequality and  the unbiasedness of the estimate $\hat{I}(f_\epsilon)$, we have
	\begin{align*}
	\E{\abs{I(f)-\hat{I}(f)}}&=\E{\abs{I(f)-I(f_\epsilon)+I(f_\epsilon)-\hat{I}(f_\epsilon)+\hat{I}(f_\epsilon)-\hat{I}(f)}}\\
	&\leq I(\abs{f-f_\epsilon})+\E{\abs{I(f_\epsilon)-\hat{I}(f_\epsilon)}}+\E{\hat{I}(\abs{f_\epsilon-f})}\\
	&\le 2I(\abs{f-f_\epsilon})+\var{\hat{I}(f_\epsilon)}^{1/2}.
	\end{align*}
	Proposition~\ref{prop:tranError} gives $I(\abs{f-f_\epsilon})=O(\epsilon^\gamma)$, where $\gamma=1-\max_iA_i-\eta$. For the function $f_\epsilon(\bm u)=g_\epsilon(\bm u)\I{\bm{u}\in\Omega}$, it follows from Propositions~\ref{lem:rates}, \ref{prop:hk} and \ref{prop:max} that  
	\begin{align*}
	\var{\hat{I}(f_\epsilon)}^{1/2}&\leq \sqrt{c_{d,\omega}}M_g n^{-(1/2+1/(4d-2))}(\log n)^{d/(2d-1)}\\
	&=O(\epsilon^{-\max_i A_i-\eta}n^{-1/2-1/(4d-2)}(\log n)^{d/(2d-1)}).
	\end{align*}
	Consequently,
	\begin{align*}
	\E{\abs{I(f)-\hat{I}(f)}}=O(\epsilon^{\gamma})+O(\epsilon^{\gamma-1}n^{-1/2-1/(4d-2)}(\log n)^{d/(2d-1)}).
	\end{align*}
	Taking $\epsilon \propto n^{-1/2-1/(4d-2)}(\log n)^{d/(2d-1)}$ establishes \eqref{eq:errorbound1}. The rate  \eqref{eq:errorbound2} can be proved in the same way.
\end{proof}

From Theorem~\ref{thm:rates}, the rates for discontinuous integrands with singularities are not faster than those in Proposition~\ref{lem:rates}. 
RQMC is asymptotically superior to Monte Carlo when $A_i<1/(2d)$ for all $i$.
For some applications in computational finance (see Section~\ref{sec:examples} for some examples), it is possible that $g$ obeys the growth condition \eqref{eq:grow} with arbitrarily small positive $A_i$ for all $i$. The associated rates are presented in the following corollary, which are asymptotically superior to plain Monte Carlo sampling. In this case, the singularities may be regarded as QMC-friendly singularities because they deliver the best possible rate in our setting. 

\begin{cor}\label{cor:rates}
	Suppose that $f$ is given by \eqref{eq:target}, where $g$ satisfies the boundary growth condition with arbitrarily small positive $A_i$ for all $i$. Assume that the sequence $\bm u_1,\dots, \bm u_n$ in \eqref{eq:estimate} is a
	scrambled $(t,d)$-sequence in base $b\geq 2$. If $\partial \Omega$ admits a $(d-1)$-dimensional Minkowski content, then 
	\begin{equation}\label{eq:errorbound3}
	\E{\abs{I(f)-\hat{I}(f)}} = O(n^{-(1/2+1/(4d-2))+\epsilon})
	\end{equation}
	for arbitrary small $\epsilon>0$.
	If $\Omega$ is a partially axis-parallel set with  irregular dimension $d_u$ defined by \eqref{eq:clinder}, where $\partial \Omega_u$ admits a $(d_u-1)$-dimensional Minkowski content, then
	\begin{equation}\label{eq:errorbound4}
	\E{\abs{I(f)-\hat{I}(f)}} = O(n^{-(1/2+1/(4d_u-2))+\epsilon}).
	\end{equation}
\end{cor}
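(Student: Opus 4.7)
The plan is to derive this as a direct consequence of Theorem~\ref{thm:rates}, exploiting the freedom to take both the growth exponents $A_i$ and the auxiliary parameter $\eta$ arbitrarily small. Given an arbitrary target tolerance $\epsilon>0$, I would split the $\epsilon$-budget into two pieces: one to close the gap between $\gamma(1/2+1/(4d-2))$ and $1/2+1/(4d-2)$, and one to absorb the logarithmic factor $(\log n)^{\gamma d/(2d-1)}$ into a small polynomial term.

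Concretely, I would first select positive numbers $A_1,\ldots,A_d$ small enough and $\eta>0$ small enough so that $\gamma = 1 - \max_i A_i - \eta$ satisfies $\gamma(1/2+1/(4d-2)) \ge 1/2+1/(4d-2) - \epsilon/2$. Then I would use the standard fact that $(\log n)^\alpha = O(n^{\epsilon/2})$ for any fixed $\alpha>0$ to absorb the logarithmic factor of exponent $\gamma d/(2d-1)$. Substituting these two estimates into the bound \eqref{eq:errorbound1} delivers \eqref{eq:errorbound3}. The partially axis-parallel case \eqref{eq:errorbound4} follows by the identical argument applied to \eqref{eq:errorbound2}, with $d_u$ in place of $d$ in the polynomial exponent and $d/(2d_u-1)$ as the log exponent to be absorbed.

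The proof is essentially a bookkeeping exercise and presents no structural obstacle. The only point to verify is that the implicit constants $C_1,C_2,C_3$ produced by Propositions~\ref{prop:hk}, \ref{prop:tranError}, and \ref{prop:max}, together with $c_{d,\Omega}$ from Proposition~\ref{lem:rates}, remain finite for the chosen $A_i$ and $\eta$; this is immediate because for each fixed such choice these constants depend only on $g$, $d$, and $\Omega$ (and not on $n$), so the $O$-notation is unaffected by the fact that they may grow as the $A_i$ and $\eta$ shrink. Hence a single valid choice of $(A_i,\eta)$ suffices to produce the bounds \eqref{eq:errorbound3} and \eqref{eq:errorbound4} for each prescribed $\epsilon$.
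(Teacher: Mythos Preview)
Your proposal is correct and follows exactly the intended route: the paper presents this corollary immediately after Theorem~\ref{thm:rates} as a direct consequence, without spelling out a separate proof, and your argument---choosing the $A_i$ and $\eta$ small enough to push $\gamma$ close to $1$, then absorbing the logarithmic factor into $n^{\epsilon/2}$---is precisely the bookkeeping the reader is expected to supply.
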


\section{Examples from computational finance}\label{sec:examples}
Let $S(t)$ denote the underlying price dynamics at time $t$ under the risk-neutral measure. In a simulation framework, it is common that the prices are simulated at discrete times $t_1,\dots,t_d$ satisfying $0=t_0<t_1<\dots<t_d=T$, where $T$ is the maturity of the financial derivative of interest. Without loss of generality,  we assume that the discrete times are evenly spaced, i.e., $t_i=i\Delta t$, where $\Delta t=T/d$. For simplicity, denote $S_i=S(t_i)$, and $\bm{S}=(S_1,\dots,S_d)^\top$. Under the risk-neutral measure, the price and the sensitivities of the financial derivative can 
be expressed as an expectation  $I=\E{f(\bm{S})}$ for a real function $f$ over $\mathbb{R}^d$. To translate the problem into QMC setting, we suppose that $S_i$ can be expressed as a function of $\bm u\sim \U{([0,1]^d)}$, denoted by $S_i(\bm u)$, after some appropriate transformations. Let $\bm{S}(\bm{u})=(S_1(\bm{u}),\dots,S_d(\bm{u}))^\top$. We thus have 
$$I=\E{f(\bm{S})}=\E{f(\bm{S}(\bm{u}))}=\int_{[0,1]^d}f(S_1(\bm{u}),\dots,S_d(\bm{u}))\mrd \bm u.$$
After the transformations, the integrand $f\circ \bm{S}$ is often unbounded
at the boundary of the unit cube. 

Many functions in the pricing and hedging of financial derivatives involve indicator functions, which can be expressed in the form
\begin{equation}\label{eq:payoff}
f(\bm{S}) = g(\bm S)\I{h(\bm S)\geq 0},
\end{equation}
where $g$ and $h$ are usually smooth functions over $\mathbb{R}^d$ (see \cite{he:wang:2014}). For pricing financial options, the factor $g$
determines the magnitude of the payoff and $h(\bm{S})>0$ gives the payout condition. For calculating Greeks by the pathwise method,
the target function often involves an indicator function as in \eqref{eq:payoff} even though the underlying payoff function is continuous. 

We assume that under the risk-neutral measure the asset follows the geometric Brownian motion
\begin{equation}
\frac{\mrd S(t)}{S(t)}=r\mrd t+\sigma \mrd B(t),\label{GBM}
\end{equation}
where $r$ is
the risk-free interest rate, $\sigma$ is the volatility and $B(t)$
is the standard Brownian motion. Under this assumption, the solution of \eqref{GBM} is analytically available
\begin{equation}
S(t)=S_0\exp[(r-\sigma^2/2)t+\sigma B(t)],\label{Solution}
\end{equation}
where $S_0$ is the initial price of the asset. Let $\bm{x}:=(B(t_1),\dots,B(t_d))^{\trans}$. We have $\bm{x}\sim N(\bm{0},\bm{\Sigma})$, where the entries of $\bm{\Sigma}$ are given by $\Sigma_{ij}=\Delta t\min(i,j)$.

Note that $\bm{\Sigma}$ is positive definite. Let $\bm{A}$ be a generating matrix satisfying $\bm{A}\bm{A}^\top=\bm{\Sigma}$. Let $\Phi$ be the cumulative distribution function of the standard normal distribution. Using the transformation $\bm{x}=\bm{A}\Phi^{-1}(\bm{u})$, it follows from
\eqref{Solution} that
\begin{align}\label{eq:si}
S_i(\bm{u})= S_0\exp\left[(r-\sigma^2/2)i\Delta t+\sigma \sum_{j=1}^da_{ij}\Phi^{-1}(u_j)\right].
\end{align}

To verify the boundary growth condition, we need  partial derivatives of ${g\circ\bm S}$ of order up to the dimension of the unit
cube. The multivariate
Faa di Bruno formula from \cite{cons:1996} gives an arbitrary mixed partial derivative of $g\circ \bm{S}$ in terms of partial derivatives of $g$ and $S_i$. Basu and Owen \cite{basu:2016a} also used the formula to  study the variation of some composition functions. The formula requires
that the needed derivatives exist. Let $\bm{\lambda}=(\lambda_1,\dots,\lambda_d)$ be a vector of nonnegative integers. Define $\abs{\bm \lambda}=\sum_{i=1}^{d}\lambda_i$. Denote $g_{\bm \lambda}$ as the derivative of $g$ taken $\lambda_i$ times with respect to the $i$th component.
It follows from Basu and Owen \cite{basu:2016a} that for $\emptyset \neq v \subseteq 1{:}d$,
\begin{equation}\label{eq:mulderi}
\partial^v(g\circ \bm{S})=\sum_{1\leq \abs{\bm \lambda}<\abs{v}} g_{\bm\lambda}(\bm{S})\sum_{s=1}^{\abs{v}}\sum_{(\ell_r,k_r)\in \widetilde{\mathrm{KL}}(s,v,\bm \lambda)}\prod_{r=1}^s \partial^{\ell_r}S_{k_r}(\bm{u}),
\end{equation}
where 
\begin{align*}
\widetilde{\mathrm{KL}}(s,v,\bm \lambda)=\{&(\ell_r,k_r)|r\in 1{:}s,\emptyset\neq \ell_r\subseteq 1{:}d,k_r\in1{:}d,\cup_{r=1}^s\ell_r =v,\\
&\ell_r\cap\ell_{r'}=\emptyset,\text{ for } r\neq r'\text{ and }\abs{\{j\in 1{:}d|k_j=i\}}=\lambda_i\}.
\end{align*}

The following lemma is a result of Owen \cite{owen:2006}. We prove it here also.
\begin{lem}\label{lem:partials}
	Suppose that $S_i$ is given by \eqref{eq:si}; then for any $v\subseteq 1{:}d$ and $i\in1{:}d$, 
	\begin{equation}\label{eq:partialS}
	\abs{\partial^{v}S_{i}(\bm{u})} \leq C_{i}\prod_{j\in v}\min(u_j,1-u_j)^{-A_j-1}\prod_{j\notin v}\min(u_j,1-u_j)^{-A_j}
	\end{equation}
	holds for arbitrarily small $A_j>0$ and $C_i<\infty$.
\end{lem}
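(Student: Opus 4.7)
The plan is to differentiate $S_i(\bm u)$ directly, then translate the resulting factors into bounds of the advertised form using standard tail asymptotics of the normal quantile function.

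First I would exploit the very special structure of \eqref{eq:si}: each coordinate $u_j$ enters $S_i$ only through the single term $\sigma a_{ij}\Phi^{-1}(u_j)$ inside the exponential, so the variables are separated multiplicatively. Since $\tfrac{\mrd}{\mrd u}\Phi^{-1}(u)=1/\phi(\Phi^{-1}(u))$, taking $\partial/\partial u_j$ of $S_i$ once produces a factor $\sigma a_{ij}/\phi(\Phi^{-1}(u_j))$. Because each $u_j$ appears at most to first order in the exponent and $\partial^v$ is one differentiation per index $j\in v$, no cross terms arise and
\begin{equation*}
\partial^v S_i(\bm u) \;=\; S_i(\bm u)\,\prod_{j\in v}\frac{\sigma a_{ij}}{\phi(\Phi^{-1}(u_j))}.
\end{equation*}
This reduces the lemma to two kinds of pointwise estimates on $(0,1)$, one for $|S_i(\bm u)|$ and one for $1/\phi(\Phi^{-1}(u_j))$.

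Next I would bound each factor by $\min(u_j,1-u_j)^{-\alpha}$ for arbitrarily small $\alpha>0$. The key input is the classical asymptotic $\Phi^{-1}(u)\sim -\sqrt{-2\log u}$ as $u\downarrow 0$ (and the symmetric statement as $u\uparrow 1$). For the exponential factor, write
\begin{equation*}
\exp\!\bigl[\sigma a_{ij}\Phi^{-1}(u_j)\bigr]\cdot \min(u_j,1-u_j)^{\alpha} \;=\; \exp\!\Bigl[\sigma a_{ij}\Phi^{-1}(u_j)+\alpha\log\min(u_j,1-u_j)\Bigr],
\end{equation*}
and observe that as $u_j\to 0^+$ or $u_j\to 1^-$ the $\log$ term dominates the $\sqrt{-\log}$ term, so this product is bounded; hence $\bigl|\exp[\sigma a_{ij}\Phi^{-1}(u_j)]\bigr|\le c_\alpha \min(u_j,1-u_j)^{-\alpha}$ on $(0,1)$. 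For the denominator I use the standard Mills-type estimate $\phi(\Phi^{-1}(u))\ge c'_\alpha\,\min(u,1-u)^{1+\alpha}$, which follows from $\phi(\Phi^{-1}(u))\sim u\sqrt{-2\pi\log u}$ near $0$ (and symmetrically near $1$) by the same log-dominates-sqrt-log argument. Therefore $1/\phi(\Phi^{-1}(u_j))\le C\min(u_j,1-u_j)^{-1-\alpha}$.

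Finally I assemble the pieces. The factor $|S_i(\bm u)|$ is bounded, up to the deterministic constant $S_0\exp[(r-\sigma^2/2)i\Delta t]$, by $\prod_{j=1}^d \min(u_j,1-u_j)^{-\alpha}$, while the product over $j\in v$ contributes the extra $\prod_{j\in v}\min(u_j,1-u_j)^{-1-\alpha}$. Multiplying yields $\prod_{j\in v}\min(u_j,1-u_j)^{-1-2\alpha}\prod_{j\notin v}\min(u_j,1-u_j)^{-\alpha}$. Given any prescribed $A_j>0$, choose $\alpha=\tfrac{1}{2}\min_j A_j$ (say); since $\min(u_j,1-u_j)\le 1/2$ we can freely inflate the exponents, which gives \eqref{eq:partialS}. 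The only real obstacle is getting the tail bounds on $\Phi^{-1}$ and $\phi\circ\Phi^{-1}$ in exactly the right polynomial form, and that is handled once and for all by the log-versus-$\sqrt{\log}$ comparison, which also explains why the exponents $A_j$ can be taken arbitrarily small but not zero.
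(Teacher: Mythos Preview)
Your proposal is correct and follows essentially the same route as the paper: compute $\partial^v S_i$ explicitly as $S_i(\bm u)\prod_{j\in v}\sigma a_{ij}/\phi(\Phi^{-1}(u_j))$, then use the tail asymptotics $\Phi^{-1}(\epsilon)\sim -\sqrt{-2\log\epsilon}$ to bound $\exp(a\Phi^{-1}(u_j))=O(\min(u_j,1-u_j)^{-\alpha})$ and $1/\phi(\Phi^{-1}(u_j))=O(\min(u_j,1-u_j)^{-1-\alpha})$ for arbitrarily small $\alpha>0$. The only cosmetic differences are that the paper writes the exponents as $-A_j/2$ for each factor and combines directly, whereas you carry a single $\alpha$ and then inflate exponents via $\min(u_j,1-u_j)\le 1/2$; both yield the same conclusion.
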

\begin{proof}
	It follows from \eqref{eq:si} that
	\begin{equation*}
	\partial^{v}S_{i}(\bm{u}) = S_0\exp\left[(r-\sigma^2/2)i\Delta t+\sigma \sum_{j=1}^da_{ij}\Phi^{-1}(u_j)\right]\prod_{j\in v}\left(\sigma a_{ij}\frac{\mrd \Phi^{-1}(u_j)}{\mrd u_j}\right).
	\end{equation*}
	Note that $\Phi^{-1}(\epsilon)=-\sqrt{-2\log(\epsilon)}+o(1)$ and $\Phi^{-1}(1-\epsilon)=\sqrt{-2\log(\epsilon)}+o(1)$ as $\epsilon\downarrow 0$. This leads to $\exp(a\Phi^{-1}(u_j))=O(\min(u_j,1-u_j)^{-A_j/2})$ for any $A_j>0$ and an arbitrary $a\in \mathbb{R}$. Denote $\phi(x)=(2\pi)^{-1/2}\exp(-x^2/2)$ as the probability density of the standard normal distribution. We find that
	\begin{align*}
	\frac{\mrd \Phi^{-1}(u_j)}{\mrd u_j}&=\frac{1}{\phi(\Phi^{-1}(u_j))}\\&=\sqrt{2\pi}\exp\left[(\sqrt{-2\log(u_j)}+o(1))^2/2\right]\\&=O(\min(u_j,1-u_j)^{-1-A_j/2})
	\end{align*}
	for any $A_j>0$. The inequality \eqref{eq:partialS} is thus obtained.
\end{proof}

Since the function $S_{i}$ admits the boundary growth condition for arbitrarily small $A_j>0$, Owen
\cite{owen:2006} showed that  $S_{i}$ can be integrated with error $O(n^{-1+\epsilon})$ by the Halton sequence. However the results of Owen \cite{owen:2006} cannot be applied to our target function \eqref{eq:payoff} because it is discontinuous. To apply Theorem~\ref{thm:rates}, we need to verify the boundary growth condition for the composition $g\circ \bm S$. Combining  \eqref{eq:mulderi} and \eqref{eq:partialS}, we have
\begin{equation}\label{eq:boundgs}
\abs{\partial^v(g\circ \bm{S})}\leq B_1\sum_{1\leq \abs{\bm \lambda}\leq\abs{v}} \abs{g_{\bm\lambda}(\bm{S})}\prod_{j\in v}\min(u_j,1-u_j)^{-A_j-1}\prod_{j\notin v}\min(u_j,1-u_j)^{-A_j}
\end{equation}
for some finite $B_1$, arbitrarily small $A_j>0$ and $\emptyset \neq v \subseteq 1{:}d$. Therefore, the function $g\circ \bm S$ satisfies the growth condition \eqref{eq:grow} as long as 
\begin{equation}\label{eq:glambda}
\abs{g_{\bm\lambda}(\bm{S})}\leq B_2\prod_{j=1}^d\min(u_j,1-u_j)^{-\tilde{A}_j}
\end{equation}
holds for all $\abs{\bm \lambda}\leq\abs{v}$, $\tilde{A}_j>0$ and $B_2<\infty$. This may be verified for a broad range of functions since \eqref{eq:partialS} admits that
\begin{equation}\label{eq:boundedSi}
S_{i}(\bm{u}) \leq C_{i}\prod_{j=1}^d\min(u_j,1-u_j)^{-A_j}
\end{equation}
holds for arbitrarily small $A_j>0$.
In our applications, $g$ is rather simple so that $g_{\bm\lambda}$ is available. As illustrative examples, we next show that the growth condition \eqref{eq:grow} can be satisfied with 
arbitrarily small growth rates.

\begin{exmp}\label{example1}  
	The discounted payoff of an arithmetic Asian option is
	\begin{equation}\label{eq:asianpayoff}
	f(\bm{S})= e^{-rT}\left(S_A-K\right)\I{S_A>K},  
	\end{equation}
	where $S_A=(1/d)\sum_{j=1}^{d} S_j$ and $K$ is the strike price.
\end{exmp}
\begin{exmp}\label{example2} 	
	The pathwise estimate of the \textit{delta} of an arithmetic  Asian option   is
	\begin{equation}\label{asian delta}
	f(\bm{S}) =e^{-rT} \frac{S_A}{S_0} \I{S_A>K}. 
	\end{equation}
	The  \textit{delta} of an  option  is the sensitivity with respect   to the initial price $S_0$ of the underlying asset.
\end{exmp}
\begin{exmp}\label{example3} 
	An estimate of the \textit{gamma} of an arithmetic Asian option is
	\begin{equation}\label{eq:asiangamma}
	f(\bm{S}) =e^{-rT}  \frac{S_A \left( \log (S(t_1)/S_0)-(r+\sigma^2/2 )\Delta t\right) }{S_0^2\sigma^2 \Delta t  }  \I{S_A>K},
	\end{equation}
	which results from applying the pathwise method first and then the likelihood ration method (see \cite{glas:2004}). 	
	The \textit{gamma} is the second derivative with respect to the initial price $S_0$ of the underlying asset. 
\end{exmp}
\begin{exmp}\label{example4} 
	The pathwise estimate of the \textit{rho} of an arithmetic Asian option is
	\begin{equation}\label{eq:asianrho}
	f(\bm{S}) = e^{-rT}\left[\frac{\mrd S_A}{\mrd r} -T(S_A-K)\right]
	\I{S_A>K},
	\end{equation}
	where
	\begin{equation*}
	\frac{\mrd S_A}{\mrd r} =\frac{T}{d^2} \left( \sum_{j=1}^{d} j S(t_j)\right).
	\end{equation*}
	The  \textit{rho} of an   option  is the sensitivity with respect   to the risk-free interest rate  $r$.
	
\end{exmp}
\begin{exmp}\label{example5} 
	The pathwise estimate of the \textit{theta} of an arithmetic Asian option   is
	\begin{equation}\label{eq:asiantheta}
	f(\bm{S}) = e^{-rT} \left[\frac{d S_A}{d T}-r(S_A-K) \right]
	\I{S_A>K},
	\end{equation}
	where
	\begin{equation*}
	\frac{\mrd S_A}{\mrd T} = \frac{1}{d} \sum_{j=1}^{d}  S(t_j)\left[  \frac{\omega j }{2d}  + \frac{\log (S(t_j)/S_0)}{2T}  \right].
	\end{equation*}
	The \textit{theta} of an  option  is the sensitivity with respect   to the maturity of the option  $T$.	
\end{exmp}
\begin{exmp}\label{example6} 
	The pathwise estimate of the \textit{vega} of an arithmetic Asian option is
	\begin{equation}\label{eq:asianvega}
	f(\bm{S}) =e^{-rT} \frac{1}{d} \sum_{i=1}^{d} \frac{\mrd S(t_i)}{\mrd\sigma} \I{S_A>K}, 
	\end{equation}
	in which
	\begin{equation*}
	\frac{\mrd S(t_i)}{\mrd\sigma}=S(t_i)\frac{1}{\sigma}\left[ \log\left( \frac{S(t_i)}{S_0}\right) -\left( r+ \frac{1}{2}\sigma^2 \right) t_i   \right]. 
	\end{equation*}
	The  \textit{vega} of an  option  is the sensitivity with respect   to the volatility $\sigma$.
\end{exmp}	

\begin{thm}\label{thm:exm}
	Suppose that $f$ is one of the functions \eqref{eq:asianpayoff}--\eqref{eq:asianvega}, where $S_i$ is given by \eqref{eq:si}. Letting $f$ be expressed as the form \eqref{eq:payoff}, then $g\circ \bm S$ satisfies the boundary growth condition \eqref{eq:grow} with arbitrarily small $A_i>0$ for all $i$.
\end{thm}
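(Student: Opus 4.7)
The plan is to reduce Theorem~\ref{thm:exm} to the verification of the sufficient condition \eqref{eq:glambda} for each of the six choices of $g$, and then to invoke the general bound \eqref{eq:boundgs}, which already does the work of turning bounds on $g_{\bm\lambda}(\bm S)$ into the growth condition \eqref{eq:grow} for the composition $g\circ\bm S$. No new analytic machinery is needed; the content of the theorem is explicit bookkeeping on the partial derivatives of a handful of concrete expressions in $\bm S$.

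First I would collect three elementary estimates that follow from the tail behaviour $\Phi^{-1}(\epsilon)=-\sqrt{-2\log\epsilon}+o(1)$ already exploited in Lemma~\ref{lem:partials}. Because $S_i(\bm u)^k=S_0^k\exp[k(r-\sigma^2/2)i\Delta t+k\sigma\sum_j a_{ij}\Phi^{-1}(u_j)]$ for any real $k$ (positive or negative), the very argument that gave \eqref{eq:boundedSi} yields $|S_i(\bm u)|^k\le C\prod_j \min(u_j,1-u_j)^{-A_j}$ with arbitrarily small $A_j>0$. Next, from $|\Phi^{-1}(u_j)|=O(\min(u_j,1-u_j)^{-A_j})$ for any $A_j>0$, we have $|\log(S_i(\bm u)/S_0)|\le C\prod_j \min(u_j,1-u_j)^{-A_j}$ with arbitrarily small $A_j>0$. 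Finally, finite products and polynomial combinations of such bounds are absorbed into the same template, since the exponents merely add and can be made uniformly small.

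Then, case by case, I would write each $g$ explicitly and observe that every partial $g_{\bm\lambda}(\bm S)$ with $|\bm\lambda|\le d$ is a finite polynomial expression in the symbols $S_i^k$ (with $k\in\mathbb Z$, possibly negative) and $\log(S_i/S_0)$, multiplied by constants depending on $r,\sigma,T,d,K,S_0$. In Example~\ref{example3}, the factor $S_A\log(S(t_1)/S_0)$ contributes derivatives $\log(S(t_1)/S_0)$, $1/S(t_1)$, $-1/S(t_1)^2$, and so on; Examples~\ref{example5} and~\ref{example6} involve terms of the form $S(t_i)\log(S(t_i)/S_0)$, whose derivatives are of the same type; Examples~\ref{example1}, \ref{example2}, and~\ref{example4} are low-degree polynomials in the $S_i$ and need almost nothing. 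Applying the three estimates above to each surviving term produces \eqref{eq:glambda} with arbitrarily small $\tilde A_j>0$, and \eqref{eq:boundgs} then converts this into the boundary growth condition \eqref{eq:grow} for $g\circ\bm S$.

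The one subtle point worth flagging is the appearance of negative powers $S_i^{-k}$ in the derivatives coming from the logarithmic factors in Examples~\ref{example3}, \ref{example5}, and~\ref{example6}. One might initially worry that a blow-up of $g$ at $S_i\to 0$, which corresponds to certain $u_j\to 0$, falls outside the arbitrarily-slow-growth framework inherited from \eqref{eq:boundedSi}. The resolution is that $1/S_i$ is itself an exponential in $\Phi^{-1}$ of the form treated in Lemma~\ref{lem:partials}, and is therefore covered by the first estimate with $k<0$, so no larger $A_j$ is forced by these inverse factors. Once this observation is in hand, the remainder is purely routine algebraic enumeration, carried out for each of the six examples in turn.
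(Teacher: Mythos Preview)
Your proposal is correct and follows essentially the same strategy as the paper: reduce to the sufficient condition \eqref{eq:glambda}, bound the atomic building blocks arising in $g_{\bm\lambda}(\bm S)$ using the tail asymptotics of $\Phi^{-1}$ from Lemma~\ref{lem:partials}, and then invoke \eqref{eq:boundgs}. The only organizational difference is that the paper first observes that every $g$ in the six examples is a linear combination of terms of the form $S_i$ and $\log(S_i)S_{i'}$ and then computes $g_{\bm\lambda}$ explicitly for those two templates, whereas you bound the slightly finer atoms $S_i^k$ (for $k\in\mathbb Z$) and $\log(S_i/S_0)$ first and note that every $g_{\bm\lambda}$ is a polynomial in these; both routes arrive at the same estimates, and your flagged subtlety about negative powers is exactly the point the paper handles when it bounds $S_i^{-\lambda_i}$.
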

\begin{proof}
	For the functions \eqref{eq:asianpayoff}--\eqref{eq:asianvega}, $g(\bm S)$ is a linear combination of some components $S_i$ and $\log(S_{i})S_{i'}$ for $i,i'\in 1{:}d$. It suffices to verify that these components satisfy \eqref{eq:glambda} because the linear combination then also satisfies \eqref{eq:glambda}. 
	
	Consider $g(\bm S)=S_i$ for any $i\in 1{:}d$. We have $\abs{g_{\bm\lambda}(\bm{S})}\leq 1$ for any $1\leq \abs{\bm \lambda}\leq\abs{v}$. For $\abs{\bm \lambda}=0$, $\abs{g_{\bm\lambda}(\bm{S})}=\abs{g(\bm S)}=S_i=O(\prod_{j=1}^d\min(u_j,1-u_j)^{-\tilde{A}_j})$ due to \eqref{eq:partialS}, for arbitrarily small $\tilde{A}_j>0$. In this case, $g_{\bm\lambda}$ satisfies \eqref{eq:glambda} with arbitrarily small growth rates.
	
	Consider $g(\bm S)=\log(S_i)S_{i'}$ for any $i\neq i'$. We have $g_{\bm\lambda}(\bm{S})=0$ if $\lambda_k>0$ for some $k\notin\{i,i'\}$ or $\lambda_{i'}>1$. So it reduces to consider three  cases: 
	\begin{enumerate}
		\item $1\leq \lambda_{i} \leq \abs{v},\ \lambda_k=0$ for any $k\neq i$;
		\item $0\leq \lambda_{i} \leq\abs{v}-1,\ \lambda_{i'}=1,\ \lambda_k=0$ for $k\notin\{i,i'\}$; and
		\item all $\lambda_k=0$.
	\end{enumerate}
For Case (1), we have
	\begin{equation*}
	g_{\bm\lambda}(\bm{S})=S_{i'}\frac{\mrd^{\lambda_i}\log(S_i) }{\mrd S_i^{\lambda_i}}=(-1)^{\lambda_i+1}c(\lambda_i)S_{i'}S_i^{-\lambda_i},
	\end{equation*}
	where $c(1)=1$ and $c(\lambda_i)=(\lambda_i-1)!$ for $\lambda_i>1$.
	For Case (2), we have
	\begin{equation*}
	g_{\bm\lambda}(\bm{S})=
	\begin{cases}
	(-1)^{\lambda_i+1}c(\lambda_i)S_i^{-\lambda_i},\ &\lambda_i>0\\
	\log(S_i),\ &\lambda_i=0.
	\end{cases}
	\end{equation*}
	For Case (3), $$g_{\bm\lambda}(\bm{S})=g(\bm S)=\log(S_i)S_{i'}.$$ From the proof of Lemma~\ref{lem:partials}, we have $$S_i^{-\lambda_i}=O\left(\prod_{j=1}^d\min(u_j,1-u_j)^{-\tilde{A}_j}\right)$$ and $$\abs{\log(S_i)}=O\left(\prod_{j=1}^d\min(u_j,1-u_j)^{-\tilde{A}_j}\right)$$ for arbitrarily small $\tilde{A}_j>0$. So $g_{\bm\lambda}$ satisfies \eqref{eq:glambda} with arbitrarily small growth rates.
	
	Consider $g(\bm S)=\log(S_i)S_i$ for any $i\in1{:}d$. If $\lambda_k=0$ for all $k\neq i$, we have
	\begin{equation*}
	g_{\bm\lambda}(\bm{S})=\frac{\mrd^{\lambda_i}(\log(S_i)S_i) }{\mrd S_i^{\lambda_i}}=\begin{cases}
	\log(S_i)S_i,\ &\lambda_i=0\\
	1+\log(S_i),\ &\lambda_i=1\\
	(-1)^{\lambda_i}c(\lambda_i-1)S_i^{-\lambda_i+1},\ &\lambda_i>1.
	\end{cases}
	\end{equation*}
	If $\lambda_k>0$ for some $k\neq i$, then $g_{\bm\lambda}(\bm{S})=0$. In this case, $g_{\bm\lambda}$ satisfies \eqref{eq:glambda} with arbitrarily small growth rates.
	
	Based on the reasoning above, it follows from \eqref{eq:boundgs} that for the functions \eqref{eq:asianpayoff}--\eqref{eq:asianvega}, $g\circ \bm S$ satisfies the boundary growth condition with arbitrarily small growth rates.
\end{proof}

Note that the statement in Theorem~\ref{thm:exm} holds for any decomposition of $\bm \Sigma=\bm{A}\bm{A}^\top$. To handle discontinuities, Wang and Tan \cite{wang:2013} proposed 
the orthogonal transformation (OT) method to make the discontinuities QMC
friendly, in the sense that all the discontinuity boundaries are parallel to coordinate
axes. The OT method delivers a special matrix $\bm A$ satisfying  $\bm{A}\bm{A}^\top=\bm \Sigma$ to generate the path \eqref{eq:si}. To illustrate its effects, let us consider the function 
\begin{equation}\label{eq:SG}
\tilde{f}(\bm S)=g(\bm S)\I{S_G>K},
\end{equation}
where $S_G=\prod_{i=1}^{d}S_i^{1/d}$ is the geometric average of the prices. For this function, applying the OT method arrives at
\begin{equation*}
\I{S_G>K}=\I{u_1>\kappa}
\end{equation*} 
for some constant $\kappa$ (see \cite{wang:2013} for determining the matrix $\bm A$). As a result, discontinuities occur only
on the axis-parallel hyperplane $u_1=\kappa$, which are  QMC-friendly. The function \eqref{eq:SG} is then transformed to $g(\bm S(\bm{u}))\I{\bm u\in\Omega}$, where  $\Omega=\{\bm u\in[0,1]^d|u_1>\kappa\}$. Note that the irregular dimension $d_u$ of the set $\Omega$ is one. Corollary~\ref{cor:rates} admits that the expected error rate of RQMC for the transformed function $g(\bm S(\bm{u}))\I{\bm u\in\Omega}$ is $O(n^{-1+\epsilon})$ if $g \circ \bm S$ has the same kind of singularities as examined in Theorem~\ref{thm:exm}. This suggests that making the discontinuities of the function \eqref{eq:SG} QMC friendly by the OT method can improve the efficiency of QMC greatly. For the functions of the form $g(\bm S(\bm{u}))\I{S_A>K}$ in the examples above, Wang and Tan
\cite{wang:2013} suggested that using the obtained matrix $\bm A$ for the function \eqref{eq:SG} can still be effective since $S_G$ is a good substitute for $S_A$. The usefulness of this strategy was illustrated  by several numerical examples in \cite{wang:2013,he:wang:2014}.

\section{Conclusion}\label{sec:concl}
We find that for discontinuous functions with singularities along the boundary of the unit cube $[0,1]^d$, RQMC has an expected error of $O(n^{-\gamma(1/2+1/(4d-2))+\epsilon})$ for $\gamma=1-\max_i A_i\in (0,1)$ depending on the growth rates $A_i$. The convergence rate $O(n^{-\gamma(1/2+1/(4d-2))+\epsilon})$ is a bit disappointing for large values of $A_i$. However,  the error rate can be as good as $O(n^{-(1/2+1/(4d-2))+\epsilon})$ for some problems from computational finance in which the growth rates are arbitrarily small. In these cases, it seems that the singularities have insignificant impact on QMC accuracy, compared to the rate for discontinuous integrands (without singularities) found in He and Wang \cite{he:wang:2015}. We also show theoretically the benefits of making discontinuities QMC-friendly, which have been shown empirically in various numerical examples of Wang and Sloan \cite{Wang2011} and Wang and Tan \cite{wang:2013}.

For singular functions (even discontinuous) satisfying the growth condition with arbitrarily small growth rates, QMC can lead to improved accuracy. It would be interesting to know how generally the problems from financial engineering fit into this setting, beyond those under the Gaussian model discussed in Section~\ref{sec:examples}.

\section*{Acknowledgments} 
The author thanks two anonymous referees for helpful suggestions  on improving this paper. The author also thanks Professor Art B. Owen and Kinjal Basu for sharing their work \cite{basu:2016b} with him. 

\providecommand{\bysame}{\leavevmode\hbox to3em{\hrulefill}\thinspace}
\providecommand{\MR}{\relax\ifhmode\unskip\space\fi MR }
\providecommand{\MRhref}[2]{%
	\href{http://www.ams.org/mathscinet-getitem?mr=#1}{#2}
}
\providecommand{\href}[2]{#2}

\end{document}